\documentclass[smallcondensed,numbook]{svjour3}
\topmargin=0cm \oddsidemargin=0mm \textwidth=14.7cm
\textheight=22cm
\parindent=0cm
\parskip=5mm

\usepackage[table]{xcolor}
\usepackage{color}
\usepackage{amsmath}
\usepackage{amsfonts}
\usepackage{amssymb}
\usepackage{mathrsfs}
\usepackage{tabularx}
\usepackage{graphicx, subfigure}
\usepackage{enumitem}
\usepackage{pgfplots}
\pgfplotsset{width=7cm}

\usepackage{tikz, ifthen}
\usetikzlibrary{shapes.misc}

\newcommand{\undertilde}[1]{\ensuremath{\mathord{\vtop{\ialign{##\crcr
   $\hfil\displaystyle{#1}\hfil$\crcr\tikzstyle{largegraph}=[
  every node/.style={circle,draw,fill=black,inner sep=0pt,minimum width=6pt},
  every path/.append style={semithick}
]\noalign{\kern1.5pt\nointerlineskip}
   $\hfil\tilde{}\hfil$\crcr\noalign{\kern1.5pt}}}}}}

\tikzstyle{smallgraph}=[
  every node/.style={circle,draw,fill=black,inner sep=0pt,minimum width=0pt},
  every path/.append style={semithick}
]

\tikzstyle{largegraph}=[
  every node/.style={circle,draw,fill=black,inner sep=0pt,minimum width=2pt},
  every path/.append style={semithick}
]
\pgfmathsetmacro{\threevradius}{0.375*sqrt(2/3)}

\pgfmathsetmacro{\threevradius}{0.375*sqrt(2/3)}

\newcommand\inS[2]{
\tikzset{inS/.style={circle,draw,fill=black,inner sep=0pt,minimum width=8pt}}
\ifthenelse{\equal{#1}{a}}{\node[inS] at (#2,1) {};}{};
\ifthenelse{\equal{#1}{b}}{\node[inS] at (#2,0.5) {};}{};
\ifthenelse{\equal{#1}{c}}{\node[inS] at (#2,1.5) {};}{};
\ifthenelse{\equal{#1}{d}}{\node[inS] at (#2,2) {};}{};
}

\newcommand\inSa[2]{
\tikzset{inS/.style={circle,draw,fill=white,inner sep=0pt,minimum width=8pt}}
\ifthenelse{\equal{#1}{a}}{\node[inS] at (#2,1) {};}{};
\ifthenelse{\equal{#1}{b}}{\node[inS] at (#2,0.5) {};}{};
\ifthenelse{\equal{#1}{c}}{\node[inS] at (#2,1.5) {};}{};
\ifthenelse{\equal{#1}{d}}{\node[inS] at (#2,2) {};}{};
}

\newcommand\notS[2]{
\tikzset{inS/.style={cross out,draw,minimum size=8*(\pgflinewidth), inner sep=0pt, outer sep=0pt}}
\ifthenelse{\equal{#1}{a}}{\node[inS] at (#2,1) {};}{};
\ifthenelse{\equal{#1}{b}}{\node[inS] at (#2,0.5) {};}{};
\ifthenelse{\equal{#1}{c}}{\node[inS] at (#2,1.5) {};}{};
\ifthenelse{\equal{#1}{d}}{\node[inS] at (#2,2) {};}{};
}

\newcommand\notSa[2]{
\tikzset{inS/.style={strike out,draw,minimum size=8*(\pgflinewidth), inner sep=0pt, outer sep=0pt}}
\ifthenelse{\equal{#1}{a}}{\node[inS] at (#2,1) {};}{};
\ifthenelse{\equal{#1}{b}}{\node[inS] at (#2,0.5) {};}{};
\ifthenelse{\equal{#1}{c}}{\node[inS] at (#2,1.5) {};}{};
\ifthenelse{\equal{#1}{d}}{\node[inS] at (#2,2) {};}{};
}

\renewenvironment{proof}{\par {\sc {\bf Proof.}\hskip 5pt}}{\hfill \qed \par}

\begin{document}

\title{Binary Programming Formulations for the Upper Domination Problem}
\author{Ryan Burdett, Michael Haythorpe, Alex Newcombe}

\institute{Ryan Burdett
\at Flinders University, 1284 South Road, Tonsley Park, SA, Australia\\
\email{ryan.burdett@flinders.edu.au} \and Michael Haythorpe (Corresponding author)
\at Flinders University, 1284 South Road, Tonsley Park, SA, Australia, Ph: +61 8 8201 2375, Fax: +61 8 8201 2904\\
\email{michael.haythorpe@flinders.edu.au} \and Alex Newcombe
\at Flinders University, 1284 South Road, Tonsley Park, SA, Australia\\
\email{alex.newcombe@flinders.edu.au}} \maketitle

\begin{abstract}We consider Upper Domination, the problem of finding the minimal dominating set of maximum cardinality. Very few exact algorithms have been described for solving Upper Domination. In particular, no binary programming formulations for Upper Domination have been described in literature, although such formulations have proved quite successful for other kinds of domination problems. We introduce two such binary programming formulations, and show that both can be improved with the addition of extra constraints which reduce the number of feasible solutions. We compare the performance of the formulations on various kinds of graphs, and demonstrate that (a) the additional constraints improve the performance of both formulations, and (b) the first formulation outperforms the second in most cases, although the second performs better for very sparse graphs. Also included is a short proof that the upper domination number of any generalized Petersen graph $P(n,k)$ is equal to $n$.

\emph{\bf Keywords:} Upper Domination, Minimal Dominating Set, Binary Programming, Formulation, Graphs
\end{abstract}

\section{Introduction}

Consider an undirected graph $G = (V,E)$. A {\em dominating set} for $G$ is a set of vertices $S \subseteq V$ such that for every vertex $v \in V \setminus S$, there exists an adjacent vertex $w \in S$. The problem of finding a dominating set of minimum cardinality for a given graph is known as the {\em domination problem}, and the size of the minimum dominating set is the {\em domination number} of the graph, typically denoted $\gamma(G)$. The domination problem is one of the most widely studied problems in graph theory and computational complexity, and numerous algorithms have been described to find minimum dominating sets.

A dominating set $S$ is said to be {\em minimal} if no proper subset of $S$ is a dominating set. Clearly, any minimum dominating set is minimal, however some larger dominating sets may also be minimal. Then, the {\em upper domination problem} is the problem of finding the minimal dominating set of maximum cardinality for a given graph $G$. The cardinality of the latter is denoted by $\Gamma(G)$. The upper domination problem can be seen as a maximin version of the domination problem, and is hence useful in the analysis of the worst-case performance of algorithms for the latter. The upper domination problem is known to be NP-hard in general, and in particular is NP-hard for planar cubic graphs \cite{bazgan}.

There are a few algorithms designed to solve the upper domination problem for certain restricted classes of graphs. It can be easily solved for graphs with maximum degree 2. Bazgan et al. \cite{bazgan} give an exact $O^*(1.348^n)$ algorithm for subcubic graphs, as well as an exact $O(7^p)$ algorithm for graphs of pathwidth $p$. For general graphs, the best known algorithm is an enumeration algorithm due to Fomin et al. \cite{fomin} which computes {\em every} minimal dominating set in $O^*(1.7159^n)$ time, and in doing so gives the same bound on the number of minimal dominating sets in a graph of order $n$.

One common approach to constructing an exact algorithm for an NP-hard problem is to model it using mixed-integer linear programming (MILP). There are numerous highly optimised MILP solvers, with CPLEX being perhaps the most notable of these. For the standard domination problem, such a model can be easily constructed. First, we define $N(v)$ to be the set of vertices adjacent to $v$, and $N[v] := N(v) \cup \{v\}$. Then we introduce binary variables $x_v \in \{0,1\}$, $\forall v \in V$, with the interpretation that $v \in S$ if and only if $x_v = 1$. It is then easy to see that the domination problem is equivalent to the following:

$$\min \sum_{v \in V} x_v$$
$$\mbox{subject to}$$
\begin{eqnarray*}\sum_{w \in N[v]} x_w & \geq & 1, \;\;\;\;\;\;\;\;\; \forall v \in V.
\end{eqnarray*}

MILP formulations have been described in the literature for several variants of domination, such as total domination \cite{burger}, Roman domination \cite{burger,ivanovic,revelle}, weak Roman domination \cite{burger,ivanovic2}, secure domination \cite{burdett,burger}, connected domination \cite{fan,simonetti}, and power domination \cite{fan}. In particular, Burger et al. \cite{burger} noted that even an unsophisticated approach to solving the MILP formulations for secure domination (i.e. simply submitting it to CPLEX) significantly outperformed the previous best-known exact algorithms from \cite{burger2}, and indicated that better formulations or more sophisticated approaches such as column generation techniques could further improve the results; indeed, a superior formulation which can be solved even faster was given in \cite{burdett}.

In this paper, we add to this literature by describing, for the first time, MILP formulations for the upper domination problem. In particular, we introduce two formulations. The first has $2|V|$ binary variables and $3|V|$ constraints, the second has $2|V| + |E|$ binary variables and $3|V| + |E|$ constraints. We then proceed to show that both formulations can be improved with the addition of some extra constraints; there are $|V|$ new constraints for the first formulation, and $|V| + |E|$ new constraints for the second formulation. We compare these formulations experimentally, and observe that (a) the additional constraints lead to improved performance, and (b) the first formulation generally outperforms the second for general graphs; however, the second formulation does perform better for very sparse graphs.

\section{Formulations\label{sec-formulations}}

We now introduce two MILP formulations for the upper domination problem. In particular, these are binary programming formulations, as all variables in both formulations will be binary.

\subsection{Formulation 1}

In the following formulation, we will use two vectors of binary variables, ${\bf x}$ and ${\bf z}$ to find a minimal dominating set $S$ of maximum cardinality. The intended interpretation will be that $x_v = 1$ if and only if $v$ is contained in $S$, and $z_v = 1$ if and only if $|S \cap N[v]| \geq 2$. In situations where $\{w\} = S \cap N[v]$, it is common to refer to $v$ as a {\em private neighbour} of $w$. Hence, the intention is that $z_v = 0$ if and only if there is a vertex $w \in N[v]$ such that $v$ is a private neighbour of $w$. We will use $d(v)$ to denote the degree of vertex $v$.

$$\max \sum_{v \in V} x_v$$
$$\mbox{subject to}$$
\begin{eqnarray}\sum_{w \in N[v]} x_w & \geq & 1, \;\;\;\qquad\qquad \forall v \in V,\label{f1-1}\\
\sum_{w \in N[v]} x_w - d(v)z_v & \leq & 1, \;\;\;\qquad\qquad \forall v \in V,\label{f1-2}\\
x_v + \sum_{w \in N[v]} z_w & \leq & d(v) + 1, \qquad \forall v \in V,\label{f1-3}
\end{eqnarray}


\begin{theorem}Constraints (\ref{f1-1})--(\ref{f1-3}) provide a correct formulation for the upper domination problem.\label{thm-f1}\end{theorem}

\begin{proof}We first argue that any solution $({\bf x}, {\bf z})$ to the constraints (\ref{f1-1})--(\ref{f1-3}) corresponds to a minimal dominating set. Since ${\bf x}$ is binary, we can define $S({\bf x})$ to be the set of all vertices $v$ for which $x_v = 1$, and it is clear that from (\ref{f1-1}) that $S({\bf x})$ is a dominating set. Then, recall that a dominating set $S$ is minimal if no proper subset of $S$ is a dominating set. Hence it is sufficient to confirm that, for every vertex $v \in S({\bf x})$, the set $S({\bf x}) \setminus \{v\}$ is not dominating.

Consider any vertex $v \in S({\bf x})$, then by definition we have $x_v = 1$. From (\ref{f1-3}) this implies that $\sum_{w \in N[v]} z_w \leq d(v)$, which means there is at least one vertex, say $u \in N[v]$, such that $z_u = 0$. But from constraints (\ref{f1-2}), this implies that $\sum_{w \in N[u]} x_w \leq 1$. Since $x_v = 1$, this simplifies to $\sum_{w \in N[u] \setminus \{v\}} x_w \leq 0$. Hence, $S({\bf x}) \cap N[u] = \{v\}$, and so $S({\bf x}) \setminus \{v\}$ is not dominating. Since this argument can be applied to every vertex in $S({\bf x})$, we conclude that $S({\bf x})$ is a minimal dominating set.

Next, we argue that for every minimal dominating set, there is a corresponding solution to the constraints (\ref{f1-1})--(\ref{f1-3}). Suppose that $S$ is a minimal dominating set, then we need to assign values to ${\bf x}$ and ${\bf z}$ that will satisfy (\ref{f1-1})--(\ref{f1-3}). For all $v \in V$, we assign $x_v = 1$ if $v \in S$, and $x_v = 0$ if $v \not\in S$. Furthermore, for every vertex $v \in S$, there must be at least one vertex, say $w \in N[v]$ such that $S \cap N[w] = \{v\}$. For each $v \in S$ we select all such vertices $w$ and assign $z_w = 0$. Then, once we have finished considering all $v \in S$, any unassigned $z_w$ values are set to 1. Note that it is clear that (\ref{f1-1}) is satisfied, since it only contains $x$ variables and $S$ is a dominating set. We will now argue that constraints (\ref{f1-2}) and (\ref{f1-3}) are also satisfied.

Consider constraints (\ref{f1-2}). For any given $v \in V$, the constraint could only be violated if $z_v = 0$. However, according to the assignment of values above, this implies that $|S \cap N[v]| = 1$. Hence $\sum_{w \in N[v]} x_w = 1$ and so the constraint is not violated for any valid choice of minimal dominating set $S$.

Finally, consider constraints (\ref{f1-3}). For any given $v \in V$, this constraint could only be violated if $x_v = 1$ and $\sum_{w \in N[v]} z_w = d(v) + 1$. However, according to the assignment of values above, this means there is no vertex $w \in N[v]$ such that $S \cap N[w] = \{v\}$. Hence, $S \setminus \{v\}$ is still a dominating set, which contradicts the assumption that $S$ is minimal. Hence, this constraint is not violated for any valid choice of minimal dominating set $S$.\end{proof}


\subsection{Formulation 2}

In the following formulation, in addition to the $x_v$ variables from Formulation 1, we introduce a vector {\bf y}, containing binary variables $y_{vw}$ defined for all $v \in V$, and $w \in N[v]$. The intended interpretation will be that if $y_{vw} = 1$, then vertex $w$ is dominated {\em only} by vertex $v$; that is, $w$ is a private neighbour of $v$. We again use $d(v)$ to denote the degree of vertex $v$.

$$\max \sum_{v \in V} x_v$$
$$\mbox{subject to}$$
\begin{eqnarray}\sum_{w \in N[v]} x_w & \geq & 1, \;\;\;\;\;\qquad \forall v \in V,\label{f2-1}\\
x_v - \sum_{w \in N[v]} y_{vw} & \leq & 0, \;\;\;\;\;\qquad \forall v \in V,\label{f2-3}\\
d(w) y_{vw} + \sum_{u \in N[w] \setminus v} x_u & \leq & d(w), \qquad \forall v \in V, w \in N[v].\label{f2-4}\end{eqnarray}

\begin{theorem}Constraints (\ref{f2-1})--(\ref{f2-4}) provide a correct formulation for the upper domination problem.\label{thm-f2}\end{theorem}

\begin{proof}We first argue that any solution $({\bf x}, {\bf y})$ to the constraints (\ref{f2-1})--(\ref{f2-4}) corresponds to a minimal dominating set. Since ${\bf x}$ is binary, we define $S({\bf x})$ to be the set of all vertices $v$ for which $x_v = 1$. It is clear that from (\ref{f2-1}) that $S({\bf x})$ is a dominating set. Then, recall that a dominating set $S$ is minimal if no proper subset of $S$ is a dominating set. Hence it is sufficient to confirm that, for every vertex $v \in S({\bf x})$, the set $S({\bf x}) \setminus \{v\}$ is not dominating.

Consider any vertex $v \in S({\bf x})$, then by definition we have $x_v = 1$. From (\ref{f2-3}) this implies that $\sum_{w \in N[v]} y_{vw} \geq 1$. Hence, there is some $w \in N[v]$ such that $y_{vw} = 1$. Then, consider (\ref{f2-4}), for the corresponding values of $v$ and $w$. Since $y_{vw} = 1$, (\ref{f2-4}) simplifies to $\sum_{u \in N[w] \setminus v} x_u \leq 0$. Hence, $S({\bf x}) \cap N[w] = \{v\}$, and so $S({\bf x}) \setminus \{v\}$ is not dominating. Since this argument can be applied to every vertex in $S({\bf x})$, we conclude that $S({\bf x})$ is a minimal dominating set.

Next, we argue that for every minimal dominating set, there is a corresponding solution to the constraints (\ref{f2-1})--(\ref{f2-4}). Suppose that $S$ is a minimal dominating set, then we need to assign values to ${\bf x}$ and ${\bf y}$ that will satisfy (\ref{f2-1})--(\ref{f2-4}). For all $v \in V$, we assign $x_v = 1$ if $v \in S$, and $x_v = 0$ if $v \not\in S$. Furthermore, for every vertex $v \in S$, there must be at least one vertex, say $w \in N[v]$ such that $w$ is a private neighbour of $v$. For each $v \in S$ we can select exactly one such vertex $w$, and denote it by $p(v)$. Then, for all $v \in V$, $w \in N[v]$, we assign $y_{vw} = 1$ if $v \in S$ and $w = p(v)$, and $y_{vw} = 0$ otherwise. Note that it is clear that (\ref{f2-1}) is satisfied, since it only contains $x$ variables and $S$ is a dominating set. We will now argue that constraints (\ref{f2-3}) and (\ref{f2-4}) are also satisfied.


Consider constraints (\ref{f2-3}). For any given $v$ and $w$, the constraint could only be violated if $x_v = 1$ and $\sum_{w \in N[v]} y_{vw} = 0$. However, if $x_v = 1$, then $v \in S$, and hence in the assignment of values above, we have chosen a private neighbour of $v$, say $p(v) \in N[v]$, and assigned $y_{vp(v)} = 1$. Hence, $\sum_{w \in N[v]} y_{vw} \geq 1$ and so this constraint is not violated for any valid choice of minimal dominating set $S$.

Finally, consider constraints (\ref{f2-4}). It is clear that $\sum_{u \in N[w] \setminus v} x_u \leq d(w)$. Hence, for any given $v$ and $w$, the constraint could only be violated if $y_{vw} = 1$ and $\sum_{u \in N[w] \setminus v} x_u \geq 1$. However, the latter implies there is some vertex $u \in N[w] \setminus v$ such that $u \in S$. That is, $w$ is not a private neighbour of $v$, and so $y_{vw} = 0$. Hence, this constraint is not violated for any valid choice of minimal dominating set $S$.\end{proof}

\section{Additional Constraints\label{sec-additional}}

In the previous section, we introduced Formulations 1 and 2 and showed that they are equivalent to the upper domination problem. In this section, we show that these formulations can each be augmented with an additional set of constraints which tighten the formulations, and hence may lead to superior empirical results; we explore this further in Section \ref{sec-results}.

\subsection{Augmented Formulation 1}

Recall that in addition to the standard ${\bf x}$ variables corresponding to vertices, Formulation 1 included an extra set of variables, {\bf z}. The intended interpretation of ${\bf z}$ is that $z_v = 1$ if and only if $|S \cap N[v]| \geq 2$, however the constraints in Formulation 1 only imply this relationship in one direction. Specifically, they imply that if $|S \cap N[v]| \geq 2$ then $z_v = 1$. This can be seen by considering constraint (\ref{f1-2}). If we have $|S \cap N[v]| \geq 2$ then that means we have $\sum_{w \in N[v]} x_w \geq 2$. Constraints (\ref{f1-2}) then reduce to $d(v)z_v \geq 1$. Since $z_v$ is binary, we must have $z_v = 1$.

In order to imply the opposite direction, we can impose additional constraints. In the following, it is clear that if $z_v = 1$, then the constraint reduces to $\sum_{w \in N[v]} x_w \geq 2$, as desired.

\begin{eqnarray}\sum_{w \in N[v]} x_w - z_v & \geq & 1, \;\qquad\qquad \forall v \in V.\label{f1-4}\end{eqnarray}

We will refer to Formulation 1 with the addition of constraints (\ref{f1-4}) as {\em Augmented Formulation 1}.

\begin{theorem}Augmented Formulation 1 is equivalent to the upper domination problem.\end{theorem}

\begin{proof}Since Augmented Formulation 1 contains all the constraints from Formulation 1, then from Theorem \ref{thm-f1} we know that any solution must correspond to a minimal dominating set. Then all that remains is to ensure that for each minimal dominating set, there is a corresponding solution to Formulation 1 which also satisfies constraints (\ref{f1-4}).

Consider any minimal dominating set in the graph, and assign values to ${\bf x}$ and ${\bf z}$ in the manner described in the proof of Theorem \ref{thm-f1}. As proved in Theorem \ref{thm-f1} this is a solution of Formulation 1. Since this solution satisfies constraints (\ref{f1-1}), we know that $\sum_{w \in N[v]} x_w \geq 1$. Then, since the solution is binary, for any given $v \in V$, constraints (\ref{f1-4}) could only be violated if $z_v = 1$ and $\sum_{w \in N[v]} x_w = 1$. However, the latter implies that $|S \cap N[v]| = 1$, and according to the assignment of values in the proof of Theorem \ref{thm-f1}, we would have set $z_v = 0$. Hence, this constraint is not violated, completing the proof.\end{proof}

\subsection{Augmented Formulation 2}

We now undergo a similar exercise for Formulation 2, which contains the extra set of variables ${\bf y}$. The intended interpretation of ${\bf y}$ is that $y_{vw} = 1$ if and only if $w$ is a private neighbour of $v$. However, the constraints of Formulation 2 only imply this relationship in one direction. Specifically, they imply that if $y_{vw} = 1$ then $w$ is a private neighbour of $v$. This can be seen by considering constraint (\ref{f2-4}). If we have $y_{vw} = 1$ then the constraint reduces to $\sum_{u \in N[w] \setminus v} x_u = 0$. This, combined with constraints (\ref{f2-1}) and the variables being binary implies that $x_v = 1$ and hence $w$ is a private neighbour of $v$.

In order to imply the opposite direction, we can impose additional constraints. In the following, it is clear that if $\sum_{u \in N[w] \setminus v} x_u = 0$ then the constraint reduces to $y_{vw} \geq 1$, and since $y_{vw}$ is binary it implies $y_{vw} = 1$.

\begin{eqnarray}y_{vw} + \sum_{u \in N[w] \setminus v} x_u & \geq & 1, \;\qquad\qquad \forall v \in V, w \in N[v].\label{f2-5}\end{eqnarray}

We will refer to Formulation 2 with the addition of constraints (\ref{f2-5}) as {\em Augmented Formulation 2}.

\begin{theorem}Augmented Formulation 2 is equivalent to the upper domination problem.\end{theorem}

\begin{proof}Since Augmented Formulation 2 contains all the constraints from Formulation 2, then from Theorem \ref{thm-f2} we know that any solution must correspond to a minimal dominating set. Then all that remains is to ensure that for each minimal dominating set, there is a corresponding solution to Formulation 2 which also satisfies constraints (\ref{f2-5}).

Consider any minimal dominating set in the graph, and assign values to ${\bf x}$ and ${\bf y}$ in the manner described in the proof of Theorem \ref{thm-f2}. As proved in Theorem \ref{thm-f2} this is a solution of Formulation 2. Since this solution is binary, for any given $v \in V$, $w \in N[v]$, constraints (\ref{f2-5}) could only be violated if $y_{vw} = 0$ and $\sum_{u \in N[w] \setminus v} x_u = 0$. However, the latter along with constraints (\ref{f2-1}) imply that $x_v = 1$ and hence $w$ is a private neighbour of $v$. Then, according to the assignment of values in the proof of Theorem \ref{thm-f2}, we would have set $y_{vw} = 1$. Hence, this constraint is not violated, completing the proof.\end{proof}

We now provide a simple example to demonstrate that the augmented formulations are tighter than the original formulations.

\begin{example}Consider the complete graph $K_4$ and a minimal dominating set $S = \{1\}$. The intended solution corresponding to this minimal dominating set for Formulation 1 is $x_1 = 1$, and all other variables are zero. Likewise, the intended solution corresponding to this minimal dominating set for Formulation 2 is $x_1 = y_{11} = y_{12} = y_{13} = y_{14} = 1$, and all other variables are 0. However, it can be easily checked that Formulation 1 is also satisfied by setting $x_1 = 1$, and letting up to three of $z_1$, $z_2$, $z_3$ and $z_4$ be equal to 1 and the others to zero. Likewise, Formulation 2 is also satisfied by setting $x_1 = 1$, and ensuring at least one of $y_{11}$, $y_{12}$, $y_{13}$ and $y_{14}$ is equal to 1 while the others are zero. Hence for both formulations, there are multiple solutions corresponding to this one minimal dominating set. However, it can be checked that only the intended solutions satisfy the additional constraints in the augmented formulations.\end{example}

\section{Experimental Results\label{sec-results}}

We now present some experimental results comparing the performance of the two formulations and their augmented versions. Each instance considered was run using CPLEX v12.9.0 via the Concert environment to MATLAB R2022b update 1 (9.13.0.2080170) 64-bit. The experiments were conducted on an Intel(R) Core(TM) i5-12500 CPU with a 6 core, 3.00GHz processor and 16GB of RAM, running Windows 10 Enterprise version 22H2. In each case, we set a maximum time limit of 10,000 seconds. It is worth noting that although there is an overhead involved with using the Concert environment in MATLAB, the times we report here are solely CPLEX runtimes.

There are a number of graphs for which the upper domination number is trivial. For instance, for complete graphs $K_n$, it is easy to see that $\Gamma(K_n) = 1$. Likewise, for complete bipartite graphs $K_{mn}$ with $2 \leq m \leq n$ it is also easy to see that $\Gamma(K_{mn}) = 2$. However, these are not very interesting (or challenging) instances. Instead, we will focus on the following graph families.

\begin{enumerate} \item The $2 \times k$ queen graph, $Q_{2k}$. From \cite{hedetniemi} we have $\Gamma(Q_{2k}) = \left\lceil\frac{k}{2}\right\rceil$ for $k \geq 1$.
\item The $2 \times k$ rook graph, $K_2 \Box K_k$. From \cite{burcroff} we have $\Gamma(K_2 \Box K_k) = k$ for $k \geq 1$.
\item The $k \times k$ rook graph, $K_k \Box K_k$. From \cite{fricke} we have that $\Gamma(K_k \Box K_k) = k$, for $k \geq 1$.
\item The $k \times k$ bishop graph, $B_{kk}$. From \cite{fricke} we have $\Gamma(B_{kk}) = 2k - 2$, for $k \geq 2$.
\item The $k \times k$ knight graph, $C_{kk}$. From \cite{cockayne} we have $\Gamma(C_{kk}) = \left\lceil\frac{k^2}{2}\right\rceil$ for $k \geq 3$.
\item The $k \times k$ grid graph, $G_{kk}$. The upper domination number is not known in general for grid graphs.
\item The flower snark, $J_k$. From \cite{flowerdom} we have $\Gamma(J_k) = 2k$ for even $k \geq 4$, and $\Gamma(J_k) = 2k - 1$ for odd $k \geq 3$.
\item The generalized Petersen graph $P(n,2)$. In Section \ref{sec-petersen} we show that $\Gamma(P(n,2)) = n$ for all $n \geq 3$.\end{enumerate}

The rationale behind choosing these graph families for the experiments is as follows. Graphs based on chess boards and pieces have been widely studied for domination problems including upper domination (e.g. see \cite{hedetniemi}), and provide non-trivial and structurally interesting instances. Grid graphs have also been extensively considered for domination problems \cite{alanko,dorfling,goncalves,gravier}; the result showing that the domination number of grid graphs separates into 23 individual cases is a wonderful example of the surprising complexity of these instances.

We are also interested in exploring how graph density impacts on the performance of the formulations. The size of Formulation 1 (both before and after augmenting the additional constraints) depends only on the number of vertices, while the size of (Augmented) Formulation 2 depends on the number of vertices and edges. As such, it is reasonable to expect that the relative performance of these formulations might be impacted by the graph density. To study this, we have selected graph families with a variety of average degrees. For the queen, rook and bishop graphs, the average degree increases with the size of the graph. For the knight and grid graphs, the average degree tends towards 8 and 4 respectively as the size increases. We have also included flower snarks and generalized Petersen graphs as widely studied examples of 3--regular graphs.

Finally, we will also consider two kinds of random graphs; Erd\H{o}s-R\'{e}nyi graphs, and unit disk graphs. In particular, the latter have been widely considered in the context of domination problems \cite{clark,marathe,shang,wang}. For both Erd\H{o}s-R\'{e}nyi graphs and unit disk graphs, the parameters can be set to ensure that the expected average degree is equal to a desired value, which enables us to further explore the impact of graph density on the performance of the formulations. In the upcoming experiments, we consider average degrees of 4, 6, 8 and 10.

The time taken to solve the instances for various choices of parameters are displayed in Figures \ref{fig-udgtest}--\ref{fig-ertest}. In those figures, the large dotted line corresponds to Formulation 1 in this paper, the small dotted line to Formulation 2, the solid line to Augmented Formulation 1, and the dashed line to Augmented Formulation 2.

\input{ud_plots}

We first comment on the performance of the augmented formulations compared to the non-augmented formulations. As discussed in Section \ref{sec-additional}, the augmented formulations have a reduced set of feasible solutions, however this does not necessarily correspond to a faster computation time for a solver. For instance, there may be additional overhead costs associated with handling the additional constraints, and there may also be fewer optimal solutions for the solver to ``stumble into". However, we observe that in our experiments, the augmented formulations did indeed outperform their equivalent non-augmented formulations, particularly as the size of the graph increased. In particular, Augmented Formulation 1 performed significantly better than Formulation 1 in all experiments. Augmented Formulation 2 generally outperformed Formulation 2, but the level of improvement was not as stark as for the first formulation, and there were some graph families where Formulation 2 displayed (marginally) superior performance even as the size of the graph increased.

For graphs where the average degree increases with the size of the graph (e.g. the queens, rook, and bishop graphs), we observed that Augmented Formulation 1 tended to outperform Augmented Formulation 2. However, for 3--regular graphs (e.g. the Flower snarks and the generalized Petersen graphs) Augmented Formulation 2 significantly outperformed Augmented Formulation 1. We postulate that there is some tipping point for the average degree where the two formulations perform equally well, and indeed, we observe this for the randomly generated graphs with low average degree (e.g. Erd\H{o}s-R\'{e}nyi and unit disk graphs). For the randomly generated graphs with average degree 4, Augmented Formulation 2 outperforms Augmented Formulation 1. However, the gap closes as the average degree is increased, with the two formulations performing similarly well for Erd\H{o}s-R\'{e}nyi graphs of average degree 6, and unit disk graphs of average degree 10.

In conclusion, these experiments suggest that, among the formulations introduced in this paper, Augmented Formulation 1 should be favoured for all but very sparse instances, in which case Augmented Formulation 2 should be favoured instead. A fascinating topic for future research would be to consider if either of these formulations could be further tightened with additional constraints, or by considering different variables. Alternatively, it would be interesting to investigate whether the constraints developed in this paper could be employed directly to construct more sophisticated exact algorithms for upper domination, rather than simply embedding them in an MILP formulation.

\section{Upper domination numbers for generalized Petersen graphs\label{sec-petersen}}

We conclude this paper by deriving the upper domination numbers for all generalized Petersen graphs, a family of graphs first introduced by Coxeter \cite{coxeter} and later named by Watkins \cite{watkins}. We first recall the definition of generalized Petersen graphs.

\begin{definition}Consider integers $n \geq 3$, and $1 \leq k < n/2$. Then the generalized Petersen graph with parameters $n,k$ (denoted $P(n,k)$) is a 3--regular graph on $2n$ vertices. The vertex set is $\{u_0$, $u_1$, $\hdots$, $u_{n-1}$, $v_0$, $v_1$, $\hdots$, $v_{n-1}\}$, and the edge set is constructed as follows. For each $0 \leq i \leq n-1$, $P(n,k)$ contains edges $\{u_i,u_{i+1}\}$, $\{u_i,v_i\}$, and $\{v_i,v_{i+k}\}$, where the subscripts are to be read modulo $n$.\end{definition}

In the upcoming proof, we will make use of the following two results from \cite{bazgan} and \cite{rosenfeld}, where $|V(G)|$ is the order of the graph $G$, and $\alpha(G)$ is the independence number of $G$.

\begin{lemma}[Bazgan et al. \cite{bazgan}]For any given graph $G$ of minimum degree $\delta$ and maximum degree $\Delta$, we have:

$$\alpha(G) \leq \Gamma(G) \leq \max \left\{\alpha(G), \frac{|V(G)|}{2} + \frac{\alpha(G)(\Delta - \delta)}{2\Delta} - \frac{\Delta - \delta}{\Delta}\right\}.$$\label{lem-bazgan}\end{lemma}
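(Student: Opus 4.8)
The plan is to prove the two inequalities separately, starting with the easy lower bound. For $\alpha(G) \le \Gamma(G)$ I would take a maximum independent set $I$ (so $|I| = \alpha(G)$) and verify it is a minimal dominating set. Since $I$ is maximum it is maximal, so every vertex outside $I$ has a neighbour in $I$; hence $I$ dominates $G$. Minimality is immediate from independence: for any $v \in I$ the vertex $v$ has no neighbour in $I$, so removing $v$ leaves $v$ undominated, i.e. $I \setminus \{v\}$ is not dominating. Thus $I$ is a minimal dominating set and $\Gamma(G) \ge |I| = \alpha(G)$.

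For the upper bound I would fix a minimal dominating set $D$ with $|D| = \Gamma(G)$ and use the standard characterisation of minimality: $D$ is minimal if and only if every $v \in D$ has a \emph{private neighbour}, a vertex $u \in N[v]$ with $N[u] \cap D = \{v\}$. I would then partition $D = D_1 \cup D_2$, where $D_1$ consists of the vertices that are their own private neighbour (equivalently, have no neighbour in $D$) and $D_2 = D \setminus D_1$ consists of the vertices possessing an external private neighbour $u \ne v$ (necessarily $u \notin D$). Two structural facts drive the argument: $D_1$ is an independent set, so $|D_1| \le \alpha(G)$; and distinct vertices of $D_2$ have distinct external private neighbours, which are moreover non-adjacent to every vertex of $D_1$. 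Collecting one private neighbour per vertex of $D_2$ yields a set $P \subseteq V \setminus D$ with $|P| = |D_2|$ and no edges between $P$ and $D_1$.

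The core is a double count of the edges between $D_1$ and $R := V \setminus (D \cup P)$. Every neighbour of a $D_1$-vertex lies in $R$ (it cannot lie in $D$, by definition of $D_1$, nor in $P$, since $P$--$D_1$ edges are forbidden), so this edge set has size at least $\delta\,|D_1|$; on the other hand it is at most $\Delta\,|R|$. Writing $|V| = |D_1| + 2|D_2| + |R|$ and solving the resulting inequality for $|D_2|$ gives $\Gamma(G) = |D_1| + |D_2| \le \tfrac{|V|}{2} + \tfrac{|D_1|(\Delta-\delta)}{2\Delta}$. The degenerate case $D_2 = \emptyset$ makes $D = D_1$ independent and forces $\Gamma(G) \le \alpha(G)$, which is exactly the first term of the stated maximum.

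The hard part is sharpening $|D_1| \le \alpha(G)$ so as to recover the precise constant $-\tfrac{\Delta-\delta}{\Delta}$ in the statement, i.e. to replace $|D_1|$ by $\alpha(G) - 2$ in the estimate above. The first unit is free: whenever $D_2 \ne \emptyset$, adjoining a single external private neighbour $p \in P$ to $D_1$ keeps the set independent (no $P$--$D_1$ edges), so $|D_1| \le \alpha(G) - 1$. Squeezing out the second unit is the main obstacle, since one cannot in general add a second private neighbour while preserving independence; this step needs a more careful accounting, either a refinement of the bound $\Delta\,|R|$ that tracks the fact that each vertex of $R$ is itself dominated, or a sharper extremal argument. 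I would expect the cleanest route to be to run the two savings in tandem, combining $|D_1| \le \alpha(G)-1$ with a matching $O(1)$ improvement in the lower bound on $|R|$, and then check that the bookkeeping collapses to the factor $\tfrac{(\Delta-\delta)(\alpha(G)-2)}{2\Delta}$ appearing in the lemma.
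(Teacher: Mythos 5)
Your lower bound and the overall framework of your upper bound are sound, and they follow essentially the route of Bazgan et al. (note the paper you are working from only \emph{cites} this lemma from \cite{bazgan} and contains no proof of its own, so the comparison is against the source). But as you yourself acknowledge, your argument stops short of the stated constant: it yields $\Gamma(G) \leq \frac{|V(G)|}{2} + \frac{(\alpha(G)-1)(\Delta-\delta)}{2\Delta}$, missing the claim by $\frac{\Delta-\delta}{2\Delta}$, and your closing paragraph is a hope rather than a proof. The missing idea is structural and cheap, not a refinement of the edge count: every vertex of $D_1$ has \emph{no} neighbour in $D$ at all, so every vertex of $D_2$ is itself non-adjacent to all of $D_1$. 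Moreover, if $D_2 \neq \emptyset$ then the induced subgraph on $D$ contains an edge $vv'$ with both $v, v' \in D_2$ (any $D$-neighbour of a $D_2$-vertex is again in $D_2$). Now take $v$ together with the external private neighbour $p' = p(v')$ of $v'$. The set $D_1 \cup \{v, p'\}$ is independent: $v$ has no neighbour in $D_1$ by the observation above; $p'$ has no neighbour in $D_1$ by your own observation about $P$; and $v \notin N[p']$ because $N[p'] \cap D = \{v'\}$ and $v \neq v'$. Hence $|D_1| \leq \alpha(G) - 2$ whenever $D_2 \neq \emptyset$, and substituting this into your inequality $\Gamma(G) \leq \frac{|V(G)|}{2} + \frac{|D_1|(\Delta-\delta)}{2\Delta}$ gives exactly $\frac{|V(G)|}{2} + \frac{\alpha(G)(\Delta-\delta)}{2\Delta} - \frac{\Delta-\delta}{\Delta}$, with the case $D_2 = \emptyset$ covered by the first term of the maximum, since then $D = D_1$ is independent.

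It is worth seeing why the routes you sketched cannot work as stated: you cannot in general adjoin a \emph{second} vertex of $P$, because private neighbours of two adjacent $D_2$-vertices may be adjacent to each other; and you cannot adjoin a $D_2$-vertex $v$ together with its \emph{own} private neighbour $p(v)$, since $p(v) \in N(v)$ when $v \in D_2$. The correct pair mixes the two sides of an edge inside $D_2$: one endpoint of the edge, plus the private neighbour of the other endpoint. With that single fix your proof is complete; everything else in your write-up --- the partition $D = D_1 \cup D_2$, the injectivity and $D_1$-avoidance of the private neighbours, the count $\delta|D_1| \leq \Delta|R|$, and the algebra leading to $\Gamma(G) = |D_1| + |D_2| \leq \frac{|V(G)|}{2} + \frac{|D_1|(\Delta-\delta)}{2\Delta}$ --- is correct.
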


\begin{lemma}[Rosenfeld \cite{rosenfeld}]If $G$ is a $d$--regular graph with $d \leq \left\lceil\frac{|V(G)|}{2}\right\rceil$, then $\alpha(G) \leq \left\lfloor\frac{|V(G)|}{2}\right\rfloor$.\end{lemma}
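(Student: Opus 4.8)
The plan is to bound the size of a maximum independent set by double-counting the edges crossing the cut it induces, exploiting $d$-regularity on both sides of that cut. Write $n := |V(G)|$ and let $S$ be a maximum independent set, so that $|S| = \alpha(G)$. Because $S$ is independent, every edge incident to a vertex of $S$ has its other endpoint in $V \setminus S$; hence the number of edges running between $S$ and $V \setminus S$ is exactly $d\,|S|$, since each of the $|S|$ vertices of $S$ has degree $d$ and contributes all $d$ of its incident edges to the cut.

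Counting the same cut from the other side, each vertex of $V \setminus S$ also has degree $d$ and therefore sends at most $d$ edges into $S$, so the number of crossing edges is at most $d\,(n - |S|)$. Comparing the two counts yields $d\,|S| \le d\,(n - |S|)$. Provided $d \ge 1$ (which holds for any regular graph that contains an edge), I may divide by $d$ to obtain $|S| \le n - |S|$, that is, $2|S| \le n$. Since $|S|$ is an integer, this forces $|S| \le \lfloor n/2 \rfloor$, and as $|S| = \alpha(G)$ the asserted bound follows.

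The argument is short, so its main subtlety is less a hard step than a sanity check: the hypothesis $d \le \lceil n/2 \rceil$ is not actually consumed by this double-counting, which already delivers $\alpha(G) \le \lfloor n/2 \rfloor$ for \emph{every} $d$-regular graph with $d \ge 1$. I would therefore verify carefully that no obstruction has been overlooked, in particular confirming that the only genuinely excluded case is the degenerate one $d = 0$, where $G$ is edgeless, $\alpha(G) = n$, and the conclusion indeed fails. For the intended application to the $3$-regular generalized Petersen graphs we have $d = 3 \ge 1$ and the stated hypothesis is comfortably met, so the counting bound applies directly.
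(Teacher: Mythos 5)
Your proof is correct, but note that the paper itself offers no proof of this lemma at all: it is imported verbatim as a cited result of Rosenfeld, so there is no in-paper argument to compare against. Your double-counting of the cut induced by a maximum independent set $S$ is the standard and airtight route: since $S$ is independent and $G$ is $d$-regular, exactly $d\,|S|$ edges cross the cut, while the $n-|S|$ vertices outside can absorb at most $d$ each, giving $d\,|S| \le d\,(n-|S|)$ and hence $\alpha(G) \le \lfloor n/2 \rfloor$ once $d \ge 1$. This is in fact \emph{stronger} than the statement as transcribed, and your sanity check is the most valuable part of the write-up: the hypothesis $d \le \lceil n/2 \rceil$ plays no role in your argument, and indeed, read literally, it does not even exclude $d = 0$, for which the lemma is false ($G$ edgeless, $\alpha(G) = n$). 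So the condition that your proof genuinely consumes is $d \ge 1$, which the stated hypothesis does not supply; presumably Rosenfeld's original theorem is a different (finer) statement and the version quoted here is a loose paraphrase. For the paper's actual use --- Corollary 3, applied to $3$-regular graphs on at least $6$ vertices --- your hypothesis $d \ge 1$ holds comfortably, so your self-contained argument fully supports everything the paper builds on this lemma, and arguably improves on a bare citation.
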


These two results lead immediately to the following corollary.

\begin{corollary}If $G$ is a 3--regular graph with $|V(G)| \geq 6$, then $\Gamma(G) \leq \frac{|V(G)|}{2}$.\label{cor-gp}\end{corollary}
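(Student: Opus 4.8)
The plan is to combine the two cited lemmas directly, exploiting the fact that a 3--regular graph satisfies $\delta = \Delta = 3$. First I would substitute $\delta = \Delta = 3$ into the upper bound of Lemma \ref{lem-bazgan}. Because $\Delta - \delta = 0$, both correction terms $\frac{\alpha(G)(\Delta-\delta)}{2\Delta}$ and $\frac{\Delta-\delta}{\Delta}$ vanish, and the bound collapses to
$$\Gamma(G) \leq \max\left\{\alpha(G), \frac{|V(G)|}{2}\right\}.$$

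It then remains to show that this maximum is attained by the second argument, that is, that $\alpha(G) \leq \frac{|V(G)|}{2}$. For this I would invoke Rosenfeld's lemma with $d = 3$. Its hypothesis requires $3 \leq \left\lceil\frac{|V(G)|}{2}\right\rceil$, which holds precisely because $|V(G)| \geq 6$ forces $\left\lceil\frac{|V(G)|}{2}\right\rceil \geq 3$. Rosenfeld's lemma then yields $\alpha(G) \leq \left\lfloor\frac{|V(G)|}{2}\right\rfloor \leq \frac{|V(G)|}{2}$.

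Combining the two facts gives $\max\left\{\alpha(G), \frac{|V(G)|}{2}\right\} = \frac{|V(G)|}{2}$, and hence $\Gamma(G) \leq \frac{|V(G)|}{2}$, as required. I do not anticipate a serious obstacle here, since the result is essentially a transcription of the two preceding lemmas once the regularity is used to kill the $(\Delta - \delta)$ terms. The only point that genuinely requires attention is verifying the degree condition $d \leq \left\lceil\frac{|V(G)|}{2}\right\rceil$ of Rosenfeld's lemma, and this is exactly the place where the standing hypothesis $|V(G)| \geq 6$ is consumed.
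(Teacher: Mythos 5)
Your proposal is correct and follows exactly the route the paper intends: the paper presents the corollary as an immediate consequence of the two preceding lemmas, and your argument (setting $\delta = \Delta = 3$ to collapse the Bazgan bound to $\max\{\alpha(G), |V(G)|/2\}$, then using Rosenfeld's lemma with $|V(G)| \geq 6$ to get $\alpha(G) \leq \lfloor |V(G)|/2 \rfloor$) is precisely that deduction spelled out. Your attention to verifying the hypothesis $d \leq \lceil |V(G)|/2 \rceil$ is exactly where the assumption $|V(G)| \geq 6$ is used, as you note.
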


\begin{theorem}Consider the generalized Petersen graph $P(n,k)$, for $n \geq 3$ and $1 \leq k < n/2$. Then, $\Gamma(P(n,k)) = n$.\end{theorem}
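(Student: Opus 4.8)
The plan is to sandwich $\Gamma(P(n,k))$ between $n$ and $n$, using the upper bound already available from the corollary and an explicit construction for the lower bound.

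For the upper bound I would simply invoke Corollary \ref{cor-gp}. Since $P(n,k)$ is $3$--regular on $2n$ vertices and the hypothesis $n \geq 3$ gives $|V(P(n,k))| = 2n \geq 6$, the corollary yields $\Gamma(P(n,k)) \leq \frac{2n}{2} = n$ immediately, with no further work required.

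For the lower bound I would exhibit a single minimal dominating set of cardinality $n$, namely the outer rim $S = \{u_0, u_1, \ldots, u_{n-1}\}$. First I would check that $S$ dominates: every outer vertex lies in $S$, and every inner vertex $v_i$ is adjacent to $u_i$ via the spoke edge $(u_i, v_i)$, so $v_i$ is dominated regardless of the value of $k$. Then I would verify minimality by producing a private neighbor for each $u_i \in S$. The neighbors of $v_i$ are $u_i$, $v_{i+k}$, and $v_{i-k}$; since the latter two are inner vertices not in $S$, we have $N[v_i] \cap S = \{u_i\}$, so $v_i$ is dominated only by $u_i$. Hence deleting any $u_i$ leaves $v_i$ undominated, and $S$ is a minimal dominating set with $|S| = n$, giving $\Gamma(P(n,k)) \geq n$.

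Combining the two bounds gives $\Gamma(P(n,k)) = n$. There is no serious obstacle here: the corollary does all the heavy lifting for the upper bound, and the only point requiring care on the lower bound is confirming that the spoke edges make the outer rim dominating and that each $v_i$ serves as a private neighbor uniformly in $k$ — both of which follow directly from the definition of $P(n,k)$.
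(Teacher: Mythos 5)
Your proposal is correct and follows exactly the same route as the paper: Corollary \ref{cor-gp} for the upper bound $\Gamma(P(n,k)) \leq n$, and the outer rim $S = \{u_0, \ldots, u_{n-1}\}$ as an explicit minimal dominating set for the lower bound, with each inner vertex $v_i$ serving as a private neighbor of $u_i$. Your verification of minimality is slightly more explicit than the paper's (you list the neighbors of $v_i$ directly), but the argument is the same.
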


\begin{proof}By definition, $P(n,k)$ is a 3--regular graph on $2n \geq 6$ vertices. From Corollary \ref{cor-gp}, we have $\Gamma(P(n,k)) \leq n$. Then, it suffices to provide a corresponding lower bound. From the definition of $P(n,k)$ we can see that vertices $u_i$ and $v_j$ are adjacent if and only if $i = j$. Then it is clear that $S = \{u_0, \hdots, u_{n-1}\}$ is a dominating set, and that any proper subset of $S$ will leave at least one vertex $v_i$ undominated. Therefore $S$ is a minimal dominating set, and so $\Gamma(P(n,k)) \geq n$, completing the proof.\end{proof}

\section*{Declarations}

The authors have no relevant financial or non-financial interests to disclose.

\bibliographystyle{plain}

\end{document}